\newcommand{\comment}[1]{}
\newcommand{\R}{{\mathbb R}}
\newcommand{\X}{{\mathcal X}}
\def\M{{\mathcal M}}
\begin{document}

\title[On weak$^*$-convergence in $H^1_\rho(\X)$]{On weak$^*$-convergence in the localized Hardy spaces $H^1_\rho(\X)$ and its application}         

\author{Dinh Thanh Duc, Ha Duy Hung and Luong Dang Ky$\,^*$}

\keywords{$H^1$, BMO, VMO, spaces of homogeneous type}
\subjclass[2010]{42B35}

\thanks{The first author is supported by Vietnam National Foundation for Science and Technology Development (Grant No. 101.02-2014.32)\\	
	$^{*}$Corresponding author}

\begin{abstract}
Let $(\mathcal X, d, \mu)$ be a complete RD-space. Let $\rho$ be an admissible function on $\X$, which means that $\rho$ is a positive function on $\X$ and there exist positive constants $C_0$ and $k_0$ such that, for any $x,y\in \X$,
$$\rho(y)\leq C_0 [\rho(x)]^{1/(1+k_0)} [\rho(x)+d(x,y)]^{k_0/(1+k_0)}.$$
 In this paper, we define a space $VMO_\rho(\X)$ and show that it is the predual of the localized Hardy space $H^1_\rho(\X)$ introduced by Yang and Zhou \cite{YZ}. Then we prove a version of the classical theorem of Jones and Journ\'e \cite{JJ} on weak$^*$-convergence in $H^1_\rho(\X)$. As an application, we give an atomic characterization of $H^1_\rho(\X)$.
\end{abstract}

\maketitle
\newtheorem{theorem}{Theorem}[section]
\newtheorem{lemma}{Lemma}[section]
\newtheorem{proposition}{Proposition}[section]
\newtheorem{remark}{Remark}[section]
\newtheorem{corollary}{Corollary}[section]
\newtheorem{definition}{Definition}[section]
\newtheorem{example}{Example}[section]
\numberwithin{equation}{section}
\newtheorem{Theorem}{Theorem}[section]
\newtheorem{Lemma}{Lemma}[section]
\newtheorem{Proposition}{Proposition}[section]
\newtheorem{Remark}{Remark}[section]
\newtheorem{Corollary}{Corollary}[section]
\newtheorem{Definition}{Definition}[section]
\newtheorem{Example}{Example}[section]
\newtheorem*{theoremcw}{Theorem C-W}
\newtheorem*{theorema}{Theorem A}
\newtheorem*{theoremb}{Theorem B}
\newtheorem*{theoremc}{Theorem C}

\section{Introduction}

It is a well-known and classical result (see \cite{CW}) that the space $BMO(\R^n)$ is the dual of the Hardy space $H^1(\R^n)$ one of the few examples of separable, nonreflexive Banach space which is a dual space. In fact,  let $C_c(\R^n)$ be the space of all continuous functions with compact support and denote by $VMO(\mathbb R^n)$ the closure of $C_c(\R^n)$ in $BMO(\mathbb R^n)$, Coifman and Weiss showed in \cite{CW} that $H^1(\mathbb R^n)$ is the dual space of $VMO(\mathbb R^n)$, which gives to $H^1(\mathbb R^n)$ a richer structure than $L^1(\mathbb R^n)$. For example, the classical Riesz transforms $\nabla (-\Delta)^{-1/2}$ are not bounded on $L^1(\mathbb R^n)$, but are bounded on  $H^1(\mathbb R^n)$. In addition, the weak$^*$-convergence is true in $H^1(\mathbb R^n)$ (see \cite{JJ}), which is useful  in the application of Hardy spaces to compensated compactness (see \cite{CLMS}) and in the study of commutators of singular integral operators (see \cite{Ky1, Ky3}). Let $L= -\Delta+ V$ be a Schr\"odinger operator on $\mathbb R^n$, $n\geq 3$, where $V$ is a nonnegative function, $V\ne 0$, and belongs to the reverse H\"older class $RH_{n/2}(\R^n)$. The Hardy space associated with the Schr\"odinger operator $L$, $H^1_L(\R^n)$, is then defined as the set of functions $f\in L^1(\mathbb R^n)$ such that $\|f\|_{H^1_L}:=\|\mathcal M_Lf\|_{L^1}<\infty$, where $\mathcal M_L f(x): = \sup_{t>0}|e^{-tL}f(x)|$. Recently, Ky \cite{Ky2} established that the weak$^*$-convergence is true in $H^1_L(\mathbb R^n)$, which is useful in studying the endpoint estimates for commutators of singular integral operators related to $L$ (see \cite{Ky3}).

Let $(\mathcal X, d, \mu)$ be an RD-space, which means that $(\mathcal X, d, \mu)$ is a space of homogeneous type in the sense of Coifman-Weiss with the additional property that a reverse doubling property holds in $\mathcal X$ (see Section 2). Typical examples for such RD-spaces include Euclidean spaces, Heisenberg groups, Lie groups of polynomial growth, or more generally, Carnot-Carath\'eodory spaces with doubling measures. We refer to the seminal paper of Han, M\"uller and Yang \cite{HMY2} for a systematic study of the theory of function spaces in harmonic analysis on RD-spaces. Recently, Yang and Zhou \cite{YZ} introduced and studied the theory of localized Hardy spaces $H^1_\rho(\X)$ related to the admissible functions $\rho$. There, they showed that this theory has a wide range of applications in studying the theory of Hardy spaces associated with Schr\"odinger operators or degenerate Schr\"odinger operators on $\R^n$, or associated with sub-Laplace Schr\"odinger operators on Heisenberg groups or connected and simply connected nilpotent Lie groups, see \cite[Section 5]{YZ} for details.

Given a complete RD-space $(\mathcal X, d, \mu)$ and an admissible function $\rho$, we denote by $BMO_\rho(\X)$ the dual space of $H^1_\rho(\X)$ (see Section 2) and $VMO_\rho(\X)$ the closure in the $BMO_\rho$-norm of the space $C_c(\X)$ of all continuous functions with compact support. The aim of the present paper is to show that $H^1_\rho(\X)$ is a dual space and that the weak$^*$-convergence is true in $H^1_\rho(\X)$. Our main results can be read as follows:

\begin{theorem}\label{the first main theorem}
The space $H^1_\rho(\X)$ is the dual of the space $VMO_\rho(\X)$.
\end{theorem}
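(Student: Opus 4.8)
The strategy follows the classical Coifman–Weiss duality argument adapted to the localized setting. Since $VMO_\rho(\X)$ is by definition the closure of $C_c(\X)$ in $BMO_\rho(\X)$, and $BMO_\rho(\X) = (H^1_\rho(\X))^*$, a standard functional-analytic principle tells us that $(VMO_\rho(\X))^*$ is isometrically a quotient of $(H^1_\rho(\X))^{**}$; more precisely, $(VMO_\rho(\X))^*$ can be identified with the quotient of the bidual by the annihilator of $VMO_\rho(\X)$. So the real content is to show this quotient is exactly $H^1_\rho(\X)$, equivalently that $H^1_\rho(\X)$ embeds isometrically (via the natural pairing) onto a weak$^*$-dense and norm-closed subspace behaving correctly under duality. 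The cleanest route is: first show the natural map $J\colon H^1_\rho(\X)\to (VMO_\rho(\X))^*$, defined by $\langle Jf,g\rangle = \int_\X f g\,d\mu$ for $f$ a finite atomic combination and $g\in C_c(\X)$, is well-defined, bounded, and injective; then show it is surjective and an isometry.

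\textbf{Step 1 (Boundedness and injectivity of $J$).} For $f\in H^1_\rho(\X)$ and $g\in VMO_\rho(\X)\subseteq BMO_\rho(\X)$, the pairing $\langle f,g\rangle$ is controlled by $\|f\|_{H^1_\rho}\|g\|_{BMO_\rho}$ using the duality $BMO_\rho = (H^1_\rho)^*$ from Section 2; this gives $\|Jf\|_{(VMO_\rho)^*}\le \|f\|_{H^1_\rho}$. Injectivity follows because $C_c(\X)$, being dense in $L^2(\X)$ and in particular rich enough to separate points of $H^1_\rho(\X)\subseteq L^1_{loc}$, forces $f=0$ whenever $\int fg=0$ for all $g\in C_c(\X)$.

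\textbf{Step 2 (Surjectivity via a Hahn–Banach/atomic argument).} Given $\Lambda\in (VMO_\rho(\X))^*$, extend it by Hahn–Banach to a functional $\widetilde\Lambda$ on $BMO_\rho(\X)$ of the same norm; but this lands in the bidual, so instead one argues directly. The key device: $\Lambda$ restricted to $C_c(\X)$ with the $L^1$-type control on $\rho$-atoms supported in a fixed ball $B$ shows that $\Lambda$ acts as integration against an $L^2(B)$ function $f_B$ (using that $\rho$-atoms supported in $B$ span a subspace of $L^2(B)$ on which $\Lambda$ is $L^2$-bounded, after normalization). A limiting/exhaustion argument over an increasing sequence of balls, together with uniform $H^1_\rho$-control, produces a single $f\in H^1_\rho(\X)$ with $Jf = \Lambda$; here one invokes the atomic decomposition of $H^1_\rho(\X)$ from \cite{YZ} to get the uniform norm bound $\|f\|_{H^1_\rho}\lesssim\|\Lambda\|$. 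Combined with Step 1 this yields $\|f\|_{H^1_\rho} = \|\Lambda\|$, so $J$ is an isometric isomorphism.

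\textbf{Main obstacle.} The delicate point is Step 2: passing from the local representation (functional on $C_c(\X)$ is integration against some locally $L^2$ function) to membership in $H^1_\rho(\X)$ with norm control. This requires showing that a function which pairs boundedly against \emph{every} $C_c$ function with $BMO_\rho$-norm at most one must itself have controlled atomic $H^1_\rho$-norm --- essentially the hard direction of $H^1_\rho$--$BMO_\rho$ duality but now with the predual $VMO_\rho$ replacing $BMO_\rho$. One must be careful that the localization parameter $\rho$ interacts correctly with the truncation at infinity: $\rho$-atoms include both ``cancellation atoms'' on small balls and ``non-cancellation atoms'' on balls of radius comparable to $\rho$, and the approximation of a general $g\in VMO_\rho$ by $C_c$ functions must respect both types. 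I expect the proof to proceed by first establishing the result for functions $\Lambda$ that are already known to be representable, then using weak$^*$-density of such functionals --- itself a consequence of the $C_c$-density defining $VMO_\rho$ --- to conclude in general.
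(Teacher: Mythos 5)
Your Step 1 matches the easy half of the paper's argument. The problem is Step 2: it is not a proof but a description of what a proof would have to accomplish. The passage from ``$\Lambda$ acts locally as integration against an $L^2(B)$ function'' to ``the resulting function lies in $H^1_\rho(\X)$ with $\|f\|_{H^1_\rho}\lesssim\|\Lambda\|$'' is exactly the hard direction of the duality, and you explicitly defer it (``I expect the proof to proceed by\dots''). The exhaustion-over-balls scheme with ``uniform $H^1_\rho$-control'' is precisely where the difficulty lives, and no mechanism is offered for obtaining that uniform control; invoking the atomic decomposition of \cite{YZ} presupposes you already know $f\in H^1_\rho(\X)$. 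As written, the surjectivity of $J$ is asserted, not established.

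The paper closes this gap with two concrete devices you do not use. First, because $BMO_\rho$ is a genuine function space (not defined modulo constants) and $\|\phi\|_{BMO_\rho}\le C\|\phi\|_{L^\infty}$ for $\phi\in C_c(\X)$, the restriction of $L$ to $C_c(\X)$ is a bounded functional for the sup-norm, hence by the Riesz representation theorem is integration against a finite signed Radon measure $\nu$ with $|\nu|(\X)\le\|L\|$. Second, using the Yang--Zhou kernel $K_\rho$, one splits $L=K_\rho(K_\rho(L))+(L-K_\rho(K_\rho(L)))$: the first piece is represented by $K_\rho(K_\rho(\nu))$, which lies in $H^1_\rho(\X)$ with norm $\le C\|L\|$ by the mapping property $K_\rho:L^1\to H^1_\rho$; the second piece satisfies $|(L-K_\rho(K_\rho(L)))(\phi)|\le C\|L\|\,\|\phi\|_{BMO}$ via the estimate $\|\phi-K_\rho(K_\rho(\phi))\|_{BMO_\rho}\le C\|\phi\|_{BMO}$, so it extends to a bounded functional on the classical $VMO(\X)$ and is therefore represented by some $h\in H^1(\X)$ by the Coifman--Weiss duality, which is taken as known. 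The sum $h+K_\rho(K_\rho(\nu))$ represents $L$. If you want to salvage your outline, you would either need to supply the uniform $H^1_\rho$ bound in your exhaustion argument from scratch (essentially redoing Coifman--Weiss in the localized setting), or adopt a reduction of this kind to the already-known classical duality.
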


\begin{theorem}\label{the second main theorem}
Suppose that $\{f_j\}_{j\geq 1}$ is a bounded sequence in $H^1_\rho(\X)$, and that $f_j(x)\to f(x)$ for almost every $x\in\X$. Then, $f\in H^1_\rho(\X)$ and $\{f_j\}_{j\geq 1}$ weak$^*$-converges to $f$, that is, for every $\varphi\in VMO_\rho(\X)$, we have
$$\lim_{j\to\infty} \int_{\X} f_j(x) \varphi(x)d\mu(x) = \int_{\X} f(x) \varphi(x) d\mu(x).$$
\end{theorem}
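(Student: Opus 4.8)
The plan is to follow the scheme of Jones--Journ\'e \cite{JJ}, adapted to the localized, homogeneous-space setting, with Theorem~\ref{the first main theorem} supplying the duality. First the reductions. Since $\|g\|_{L^1(\X)}\le C\|g\|_{H^1_\rho(\X)}$ for every $g$ (from the maximal function characterization of $H^1_\rho(\X)$ in \cite{YZ}), $\{f_j\}$ is bounded in $L^1(\X)$, so Fatou's lemma gives $f\in L^1(\X)$. Moreover $f\in H^1_\rho(\X)$: from $f_j\to f$ a.e.\ and the uniform $L^1$-bound one obtains, by a lower-semicontinuity (Fatou-type) argument for the grand maximal function, $\M_\rho f(x)\le\liminf_j\M_\rho f_j(x)$ for a.e.\ $x$, hence $\|\M_\rho f\|_{L^1}\le\liminf_j\|\M_\rho f_j\|_{L^1}\le\sup_j\|f_j\|_{H^1_\rho}<\infty$. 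It then remains to prove the \emph{Claim}: $\int_\X f_j\varphi\,d\mu\to\int_\X f\varphi\,d\mu$ for every $\varphi\in C_c(\X)$. Granting this, by Theorem~\ref{the first main theorem} we have $f\in H^1_\rho(\X)=(VMO_\rho(\X))^*$, and since $C_c(\X)$ is dense in $VMO_\rho(\X)$, a $3\varepsilon$-argument based on $\sup_j\|f_j\|_{(VMO_\rho)^*}<\infty$ and $\|f\|_{(VMO_\rho)^*}<\infty$ extends the Claim to every $\varphi\in VMO_\rho(\X)$; and since every subsequence of $\{f_j\}$ still converges a.e.\ to $f$ and is bounded in $H^1_\rho(\X)$, the usual subsequence argument then yields weak$^*$-convergence of the full sequence.

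The substance is the proof of the Claim, the point being that mere $L^1$-boundedness with a.e.\ convergence does \emph{not} force weak$^*$-convergence — mass may concentrate at a point — so one must exploit the Hardy-space cancellation. Fix $\varphi\in C_c(\X)$ and $\lambda>0$ and apply the Calder\'on--Zygmund decomposition at level $\lambda$ underlying the characterizations of \cite{YZ}, with constants uniform in $j$: $f_j=g_j^\lambda+\sum_k\beta_{j,k}^\lambda$ with $\|g_j^\lambda\|_{L^\infty}\le C\lambda$ and $g_j^\lambda=f_j$ off $\Omega_j^\lambda:=\{\M_\rho f_j>\lambda\}$; each $\beta_{j,k}^\lambda$ supported in a Whitney ball $B_{j,k}$ of $\Omega_j^\lambda$, with $\|\beta_{j,k}^\lambda\|_{L^1}\le C\lambda\mu(B_{j,k})$ and $\int_\X\beta_{j,k}^\lambda\,d\mu=0$ whenever $r_{B_{j,k}}$ is small relative to $\rho$ on $B_{j,k}$; and $\sum_k\mu(B_{j,k})\le C\mu(\Omega_j^\lambda)\le C\lambda^{-1}\sup_j\|f_j\|_{H^1_\rho}$. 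The key geometric fact — this is where the reverse doubling of the RD-space enters — is that a ball meeting the fixed compact $\mathrm{supp}\,\varphi$ has measure bounded below by a positive quantity depending only on its radius and on $\mathrm{supp}\,\varphi$; hence there is $\delta(\lambda)\to0$ as $\lambda\to\infty$ such that, uniformly in $j$, every $B_{j,k}$ meeting $\mathrm{supp}\,\varphi$ has $r_{B_{j,k}}<\delta(\lambda)$ and so carries the cancellation condition. Therefore, for $\lambda$ large,
$$\Big|\sum_k\int_\X\beta_{j,k}^\lambda\,\varphi\,d\mu\Big|=\Big|\sum_{k:\,B_{j,k}\cap\mathrm{supp}\,\varphi\neq\emptyset}\int_\X\beta_{j,k}^\lambda\big(\varphi-\varphi(x_{j,k})\big)\,d\mu\Big|\le C\lambda\,\omega_\varphi\big(\delta(\lambda)\big)\sum_k\mu(B_{j,k})\le C'\,\omega_\varphi\big(\delta(\lambda)\big),$$
with $x_{j,k}$ the center of $B_{j,k}$ and $\omega_\varphi$ the modulus of continuity of $\varphi$; this bound is uniform in $j$ and tends to $0$ as $\lambda\to\infty$. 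For the good part, $\{g_j^\lambda\}_j$ is bounded by $C\lambda$ on the finite-measure set $\mathrm{supp}\,\varphi$, so (after passing to a subsequence, harmlessly) $\int_\X g_j^\lambda\varphi\,d\mu\to\int_\X G^\lambda\varphi\,d\mu$, where $G^\lambda$ is the a.e.\ limit of $g_j^\lambda$ on $\mathrm{supp}\,\varphi$; one then shows $\int_\X G^\lambda\varphi\,d\mu\to\int_\X f\varphi\,d\mu$ as $\lambda\to\infty$, using $f\in H^1_\rho(\X)$ together with the fact that $G^\lambda-f$ is carried, near $\mathrm{supp}\,\varphi$, by balls of radius $\le\delta(\lambda)$ over which its mean is controlled and over which $\varphi$ oscillates by at most $\omega_\varphi(\delta(\lambda))$. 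Combining, and letting first $j\to\infty$ then $\lambda\to\infty$, proves the Claim.

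The main obstacle is precisely the Claim, and within it the two delicate points: carrying out the Calder\'on--Zygmund decomposition with constants uniform in $j$ in the localized setting (handling the non-cancellative pieces at scales $\sim\rho$ via reverse doubling, so that they are invisible to a fixed compactly supported $\varphi$), and controlling the good parts $g_j^\lambda$ as $j\to\infty$ and then $\lambda\to\infty$ — i.e.\ showing that the mass which may concentrate as $j\to\infty$ is carried on shrinking balls and therefore does not contribute to $\int_\X f_j\varphi\,d\mu$ in the limit. The remaining ingredients — the lower semicontinuity of $\M_\rho$ giving $f\in H^1_\rho(\X)$, the duality of Theorem~\ref{the first main theorem}, the $3\varepsilon$-argument, and the subsequence-to-full-sequence step — are comparatively standard.
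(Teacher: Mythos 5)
Your proposal takes a genuinely different route from the paper, and the route as sketched has a real gap at its central step. The paper does not redo the Jones--Journ\'e argument at all: it extracts a subsequence that weak$^*$-converges to some $g$ in $H^1_\rho(\X)=(VMO_\rho(\X))^*$ via Theorem \ref{the first main theorem} and Banach--Alaoglu, notes that $K_\rho(K_\rho(f_{n_{k_j}}))(x)\to K_\rho(K_\rho(g))(x)$ pointwise because $\mathbb K_\rho(x,\cdot)\in C_c(\X)\subset VMO_\rho(\X)$, and then applies the already-known weak$^*$-convergence theorem for the \emph{global} space $H^1(\X)$ (\cite[Theorem 1.1]{HK}) to $f_{n_{k_j}}-K_\rho(K_\rho(f_{n_{k_j}}))$, which is bounded in $H^1(\X)$ by Lemma \ref{Yang and Zhou, the key lemma}(ii) and converges a.e. You are instead attempting to prove the Jones--Journ\'e statement from scratch in the localized setting, which is a much heavier task.

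Two concrete problems. First, the assertion $\M_\rho f\le\liminf_j\M_\rho f_j$ a.e.\ ``by a Fatou-type argument'' is unjustified: it requires $\langle f_j,\varphi\rangle\to\langle f,\varphi\rangle$ for every normalized test function $\varphi$, which does not follow from a.e.\ convergence plus an $L^1$ bound (concentration is the obstruction, and ruling it out is the content of the theorem); this step is dispensable given your Claim plus Theorem \ref{the first main theorem}, but it is not a routine reduction. Second, and fatally for the sketch, the good-part limit is not established. The quantity $\lambda\,\mu(\Omega_j^\lambda)$ is only $O(1)$ uniformly in $j$ --- there is no uniform integrability of $\{\M_\rho f_j\}_j$, e.g.\ for atoms concentrating at a point $\sup_j\lambda\,\mu(\{\M_\rho f_j>\lambda\})$ does not tend to $0$ as $\lambda\to\infty$ --- and the set $\limsup_j\Omega_j^\lambda$, which is exactly where the limit $G^\lambda$ of the good parts can differ from $f$, carries no measure bound at all (it is a $\limsup$ of sets each of measure $\le C/\lambda$ and can be large). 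So the claimed convergence $\int_\X G^\lambda\varphi\,d\mu\to\int_\X f\varphi\,d\mu$ as $\lambda\to\infty$ is precisely the hard point of Jones--Journ\'e, and nothing in your sketch plays the role of their extra device (working with $\liminf_j$ of the maximal functions, which lies in $L^1$ by Fatou, to obtain a single exceptional set independent of $j$, combined with Egorov). A minor further inaccuracy: the bad pieces built from the grand maximal function satisfy $\|\beta_{j,k}^\lambda\|_{L^1}\le C\int_{B_{j,k}^*}\M_\rho f_j\,d\mu+C\lambda\mu(B_{j,k})$ rather than $C\lambda\mu(B_{j,k})$; your bad-part estimate survives because only $\sum_k\|\beta_{j,k}^\lambda\|_{L^1}\le C\sup_j\|f_j\|_{H^1_\rho}$ is needed, but the good-part gap does not. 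Since \cite[Theorem 1.1]{HK} is available, the reduction via $K_\rho$ is both shorter and complete.
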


It should be pointed out that when $\X\equiv \R^n$, $n\geq 3$, and $\rho(x)\equiv \sup\{r>0: \frac{1}{r^{n-2}}\int_{B(x,r)}V(y)dy\leq 1\}$, where $V$ is in the reverse H\"older class $RH_{n/2}(\R^n)$, Theorem \ref{the second main theorem} is just the main theorem in the paper of Ky \cite[Theorem 1.1]{Ky2}.

Throughout the whole paper, $C$ denotes a positive geometric constant which is independent of the main parameters, but may change from line to line.


\section{Preliminaries}

Let $d$ be a quasi-metric on a set $\X$, that is, $d$ is a nonnegative function on $\mathcal X\times \mathcal X$ satisfying
\begin{enumerate}[(a)]
	\item $d(x,y)=d(y,x)$,
	\item $d(x,y)>0$ if and only if $x\ne y$,
	\item there exists a constant $\kappa\geq 1$ such that for all $x,y,z\in \mathcal X$,
$$
	d(x,z)\leq \kappa(d(x,y)+ d(y,z)).
$$
\end{enumerate}
A trip $(\mathcal X, d,\mu)$ is called a {\sl space of homogeneous type} in the sense of Coifman-Weiss if $\mu$ is a regular Borel measure satisfying {\sl doubling property}, i.e. there exists a constant $C>1$ such that for all $x\in \mathcal X$ and $r>0$,
$$\mu(B(x,2r))\leq C \mu(B(x,r)).$$

\begin{remark}\label{compactness related to a ball}
	By \cite[Theorem (3.2)]{CW}, we see that if $(\mathcal X, d,\mu)$ is a complete space of homogeneous type, then the closure of $B$ is a compact set for all ball $B\subset \X$.
\end{remark}

Recall (see \cite{HMY2}) that a space of homogeneous type $(\mathcal X, d,\mu)$ is called an {\sl RD-space} if it satisfies {\sl reverse doubling property}, i.e. there exists a constant $C>1$ such that 
$$\mu(B(x,2r))\geq C \mu(B(x,r))$$
for all $x\in \mathcal X$ and $r\in (0, \mbox{diam}(\mathcal X)/2)$, where $\mbox{diam}(\mathcal X) := \sup_{x,y\in\mathcal X} d(x,y)$.

Here and what in follows, for $x, y\in\X$ and $r>0$, we denote $V_r(x):= \mu(B(x,r))$ and $V(x,y):= \mu(B(x,d(x,y)))$.

\begin{definition}\label{definition for test functions}
	Let $x_0\in\mathcal X$, $r>0$, $0<\beta\leq 1$ and $\gamma >0$. A function $f$ is said to belong to the space of test functions, $\mathcal G(x_0,r,\beta,\gamma)$, if there exists a positive constant $C_f$ such that
	\begin{enumerate}[\rm (i)]
		\item $|f(x)| \leq C_f \frac{1}{V_r(x_0) + V(x_0,x)}\Big(\frac{r}{r+ d(x_0,x)}\Big)^\gamma$ for all $x\in\mathcal X$;
		
		\item $|f(x) - f(y)|\leq C_f  \Big(\frac{d(x,y)}{r+ d(x_0,x)}\Big)^\beta \frac{1}{V_r(x_0) + V(x_0,x)}\Big(\frac{r}{r+ d(x_0,x)}\Big)^\gamma$ for all $x,y\in \mathcal X$ satisfying that $d(x,y)\leq \frac{r + d(x_0,x)}{2\kappa}$.
	\end{enumerate}
	For any $f\in \mathcal G(x_0,r,\beta,\gamma)$, we define 
	$$\|f\|_{\mathcal G(x_0,r,\beta,\gamma)}:= \inf \{C_f: \mbox{{\rm (i)} and {\rm (ii)} hold}\}.$$
\end{definition}

Let $\rho$ be a positive function on $\X$. Following Yang and Zhou \cite{YZ}, the function $\rho$ is said to {\sl be admissible} if there exist positive constants $C_0$ and $k_0$ such that, for any $x,y\in \X$,
$$\rho(y)\leq C_0 [\rho(x)]^{1/(1+k_0)} [\rho(x)+d(x,y)]^{k_0/(1+k_0)}.$$

{\sl Throughout the whole paper}, we always assume that $\mathcal X$ is a complete RD-space with $\mu(\mathcal X)=\infty$, and $\rho$ is an admissible function on $\X$. Also we fix $x_0\in \X$.

In Definition \ref{definition for test functions}, it is easy to see that $\mathcal G(x_0,1,\beta,\gamma)$ is a Banach space.  For simplicity, we write $\mathcal G(\beta,\gamma)$ instead of $\mathcal G(x_0,1,\beta,\gamma)$. Let $\epsilon\in (0,1]$ and $\beta,\gamma\in (0,\epsilon]$, we define the space $\mathcal G^\epsilon_0(\beta,\gamma)$ to be the completion of $\mathcal G(\epsilon,\epsilon)$ in $\mathcal G(\beta,\gamma)$, and denote by $(\mathcal G^\epsilon_0(\beta,\gamma))'$ the space of all continuous linear functionals on $\mathcal G^\epsilon_0(\beta,\gamma)$. We say that $f$ is a {\sl distribution} if $f$ belongs to $(\mathcal G^\epsilon_0(\beta,\gamma))'$.

Remark that, for any  $x\in \mathcal X$ and $r>0$, one has $\mathcal G(x,r,\beta,\gamma)= \mathcal G(x_0,1,\beta,\gamma)$  with equivalent norms, but of course the constants are depending on $x$ and $r$.

Let $f$ be a distribution in $(\mathcal G^\epsilon_0(\beta,\gamma))'$. We define {\sl the  grand maximal functions} $\M(f)$ and  $\M_\rho(f)$ as following
$$\M(f)(x) := \sup\{|\langle f,\varphi \rangle|: \varphi\in \mathcal G^\epsilon_0(\beta,\gamma), \|\varphi\|_{\mathcal G(x,r,\beta,\gamma)}\leq 1\; \mbox{for some}\; r>0\},$$
$$\M_\rho(f)(x) := \sup\{|\langle f,\varphi \rangle|: \varphi\in \mathcal G^\epsilon_0(\beta,\gamma), \|\varphi\|_{\mathcal G(x,r,\beta,\gamma)}\leq 1\; \mbox{for some}\; r\in (0,\rho(x))\}.$$

\begin{definition}
	Let $\epsilon\in (0,1)$ and $\beta,\gamma\in (0,\epsilon)$.
	\begin{enumerate}[\rm (i)]
		\item The Hardy space $H^1(\mathcal X)$ is defined by
		$$H^1(\mathcal X) = \{f\in (\mathcal G^\epsilon_0(\beta,\gamma))': \|f\|_{H^1}:= \|\M( f)\|_{L^1}<\infty \}.$$
		\item The Hardy space $H^1_\rho(\mathcal X)$ is defined by
		$$H^1_\rho(\mathcal X) = \{f\in (\mathcal G^\epsilon_0(\beta,\gamma))': \|f\|_{H^1_\rho}:= \|\M_\rho( f)\|_{L^1}<\infty \}.$$
			\end{enumerate}
\end{definition}

\begin{remark}
It was established in \cite{GLY} that the space $H^1(\X)$ coincides with the atomic Hardy $H^1_{\rm at}(\X)$ of Coifman and Weiss \cite{CW}. Moreover, for all $f\in H^1(X)$,
$$\|f\|_{L^1}\leq C \|f\|_{H^1_\rho}\leq C \|f\|_{H^1}.$$
\end{remark}

Recall (see \cite{CW}) that a function $f\in L^1_{\rm loc}(\X)$ is said to be in $BMO(\X)$ if
$$\|f\|_{BMO}:=\sup_{B} \frac{1}{\mu(B)}\int_{B}\Big|f(x) - \frac{1}{\mu(B)}\int_B f(y)d\mu(y)\Big| d\mu(x)<\infty,$$
where the supremum is taken all over balls $B\subset\X$. Denote by $VMO(\X)$ the closure in $BMO$ norm of $C_c(\X)$. The following is well-known (see \cite{CW}).

\begin{theorem}\label{Coifman-Weiss}
	\begin{enumerate}[\rm (i)]
		\item The space $BMO(\X)$ is the dual space of $H^1(\X)$.
		\item The space $H^1(\X)$ is the dual space of $VMO(\X)$.
	\end{enumerate}
\end{theorem}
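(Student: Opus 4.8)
The plan is to treat the two assertions in turn, basing (i) on the atomic identification $H^1(\X)=H^1_{\rm at}(\X)$ recalled above and then drawing on (i) for (ii). For the inclusion $BMO(\X)\ss(H^1(\X))^{*}$ in (i) I would test $b\in BMO(\X)$ against a finite linear combination $f=\sum_i\lambda_i a_i$ of atoms: cancellation of an atom $a_i$ supported in a ball $B_i$ gives $\int_\X b a_i\,d\mu=\int_{B_i}(b-b_{B_i})a_i\,d\mu$, hence $|\int_\X b a_i\,d\mu|\le\|a_i\|_{L^\infty}\int_{B_i}|b-b_{B_i}|\,d\mu\le\|b\|_{BMO}$, and summing and taking the infimum over atomic decompositions yields $|\int_\X bf\,d\mu|\le C\|b\|_{BMO}\|f\|_{H^1}$, after which the functional extends by density. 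For the converse, given $\ell\in(H^1(\X))^{*}$ I would localize: writing $\X$ as an increasing union of balls $B_n$ with compact closure (Remark \ref{compactness related to a ball}), a suitably normalized mean-zero $L^2$ function supported in $B_n$ is a $(1,2)$-atom, so $\ell$ is bounded on the Hilbert space $L^2_0(B_n)$ of such functions and, by Riesz representation, is integration against some $b_n\in L^2(B_n)$ determined modulo constants; normalizing the constants so that $b_n|_{B_{n-1}}=b_{n-1}$ patches the $b_n$ into a single $b\in L^1_{\rm loc}(\X)$ with $\ell(h)=\int_\X hb\,d\mu$ on all compactly supported mean-zero $L^2$ functions $h$. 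Testing $\ell$ against the bumps $\mu(B)^{-1}(\psi-\psi_B)$, $\psi\in L^\infty(B)$ with $\|\psi\|_{L^\infty}\le1$, then recovers $\mu(B)^{-1}\int_B|b-b_B|\,d\mu\le C\|\ell\|$ for every ball $B$, so $b\in BMO(\X)$; since $h\mapsto\int_\X hb\,d\mu$ is bounded on $H^1(\X)$ and agrees with $\ell$ on the dense set of finite atomic sums, they coincide, which is (i).

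For (ii), I would first observe that the pairing $(f,\varphi)\mapsto\int_\X f\varphi\,d\mu$ defines, via (i), a bounded linear map $J\colon H^1(\X)\to(VMO(\X))^{*}$, which is injective because $H^1(\X)\ss L^1(\X)$ while $C_c(\X)\ss VMO(\X)$. The first substantive point is that $J$ is bounded below, equivalently that $C_c(\X)$ already norms $H^1(\X)$ under the $BMO$ pairing: for $f$ a finite atomic sum supported in a ball $B_0$ and any $b\in BMO(\X)$ with $\|b\|_{BMO}\le1$, I would approximate the truncation $(b-b_{2B_0})\chi_{2B_0}$ in $L^2(2B_0)$ by continuous compactly supported functions $\varphi_k$ with $\|\varphi_k\|_{BMO}\le C$; since $f$ has vanishing integral and is supported in $B_0$, $\int_\X f\varphi_k\,d\mu\to\int_\X fb\,d\mu$, whence by (i) and density $\|f\|_{H^1}\le C\,\|Jf\|$ for all $f\in H^1(\X)$, so the range of $J$ is closed.

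The decisive and hardest step is the surjectivity of $J$. Given $\ell\in(VMO(\X))^{*}$, the inequality $|\ell(\varphi)|\le\|\ell\|\,\|\varphi\|_{BMO}\le2\|\ell\|\,\|\varphi\|_{L^\infty}$ for $\varphi\in C_c(\X)$, together with the local compactness of $\X$ (Remark \ref{compactness related to a ball}) and its $\sigma$-compactness (it is covered by the balls $B(x_0,n)$), lets me represent $\ell|_{C_c(\X)}$ by a finite signed Borel measure $\nu$ via the Riesz representation theorem. What must then be shown is that $\nu$ is absolutely continuous with density $g\in H^1(\X)$ and $\|g\|_{H^1}\le C\|\ell\|$, equivalently that the grand maximal function $\M(g)$, obtained by testing against the functions of Definition \ref{definition for test functions}, lies in $L^1(\X)$. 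This is where the real difficulty lies: estimating $\M(g)(x)$ crudely through the $BMO$-norm of a test function adapted to a ball $B(x,r)$ yields only the divergent bound $C\|\ell\|/V_r(x)$, so instead one must decompose such a test function by a Calder\'on reproducing formula on the RD-space $\X$ and sum the resulting cancellation-improved, scale-by-scale estimates against a Whitney-type covering, controlling overlaps by the doubling property --- in essence the argument of Coifman and Weiss \cite{CW}. Once such a $g$ is produced, $Jg$ and $\ell$ agree on the dense subspace $C_c(\X)$, hence on all of $VMO(\X)$, and combined with the two-sided estimate above this shows $J$ to be an isomorphism up to the geometric constant $C$, which gives (ii).
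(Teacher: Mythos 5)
This theorem is not proved in the paper at all: it is stated as a known result and attributed to Coifman--Weiss \cite{CW}, so there is no internal argument to compare against. Judged on its own terms, your outline of part (i) is a correct reconstruction of the standard duality proof (pairing $BMO$ against finite sums of atoms via the cancellation $\int_\X b\,a\,d\mu=\int_{B}(b-b_{B})a\,d\mu$, and conversely representing a functional locally on $L^2_0(B_n)$ by Riesz and patching the representatives modulo constants), granting the atomic identification $H^1(\X)=H^1_{\rm at}(\X)$ from \cite{GLY}. One technical slip there and in the "bounded below" step of (ii): the sharp truncation $(b-b_{2B_0})\chi_{2B_0}$ of a $BMO$ function is \emph{not} in general in $BMO$ with norm $\le C\|b\|_{BMO}$ (cutting off can create large jumps across $\partial(2B_0)$), so the asserted approximants $\varphi_k$ with $\|\varphi_k\|_{BMO}\le C$ are not produced by the argument as written; one needs the usual level-set truncations $\min(\max(b-b_{2B_0},-N),N)$ combined with a cutoff, or some other device showing $C_c(\X)$ is norming for $H^1(\X)$.

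The genuine gap is in the surjectivity step of (ii), which you correctly identify as decisive and then do not prove. After representing $\ell|_{C_c(\X)}$ by a finite signed measure $\nu$, the assertion that $\nu\ll\mu$ with density $g\in H^1(\X)$ and $\|g\|_{H^1}\le C\|\ell\|$ \emph{is} the theorem; the sentence invoking "a Calder\'on reproducing formula \ldots summed against a Whitney-type covering" names a toolbox rather than giving an estimate, and it is exactly at this point that the crude bound $\M(\nu)(x)\le C\|\ell\|/V_r(x)$ must be replaced by something summable. The route actually used in the literature (and implicitly in the architecture of this paper) is different and cleaner: $J\colon H^1(\X)\to (VMO(\X))^{*}$ is an embedding whose image of the unit ball is weak$^*$-dense by a bipolar/Goldstine argument, and weak$^*$-closedness of that ball is obtained from a sequential compactness statement --- a bounded sequence in $H^1(\X)$ has a subsequence converging weak$^*$ against $VMO(\X)$ to an element of $H^1(\X)$ --- which is precisely the Jones--Journ\'e-type theorem of \cite{JJ}, \cite{HK} that the present paper is generalizing. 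Either you carry out the maximal-function estimate for $\nu$ in detail, or you restructure the proof of (ii) around that compactness argument; as it stands the key step is asserted, not proved.
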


\begin{definition}
	Let $\rho$ be an admissible function and $\mathcal D:= \{B(x,r)\subset \X: r\geq \rho(x)\}$. A function $f\in L^1_{\rm loc}(\X)$ is said to be in $BMO_\rho(\X)$ if
	$$\|f\|_{BMO_\rho}:= \|f\|_{BMO} + \sup_{B\in \mathcal D}\frac{1}{\mu(B)}\int_B |f(x)| d\mu(x) <\infty.$$
\end{definition}

It was established in \cite{YYZ} that

\begin{theorem}
The space $BMO_\rho(\X)$ is the dual space of $H^1_\rho(\X)$.	
\end{theorem}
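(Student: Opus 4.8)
The plan is to obtain the duality from the atomic characterization of $H^1_\rho(\X)$, following the Coifman--Weiss scheme adapted to the localized setting. Recall from \cite{YZ} that every $f\in H^1_\rho(\X)$ can be written as $f=\sum_i\lambda_i a_i$, with convergence in $(\mathcal G^\epsilon_0(\beta,\gamma))'$ and in $L^1(\X)$, where each $a_i$ is a $(\rho,2)$-atom — supported in a ball $B_i=B(x_i,r_i)$, satisfying $\|a_i\|_{L^2}\le\mu(B_i)^{-1/2}$, and with $\int_\X a_i\,d\mu=0$ whenever $r_i<\rho(x_i)$ — and $\sum_i|\lambda_i|\le C\|f\|_{H^1_\rho}$. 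Since moreover $\|a\|_{H^1_\rho}\le C$ for every $(\rho,2)$-atom $a$, the partial sums converge to $f$ in $H^1_\rho(\X)$, so finite linear combinations of $(\rho,2)$-atoms are dense there; and by the John--Nirenberg inequality $BMO(\X)\subset L^2_{\rm loc}(\X)$, which makes the pairings below meaningful.

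\emph{Step 1: $BMO_\rho(\X)\hookrightarrow(H^1_\rho(\X))^*$.} For $g\in BMO_\rho(\X)$ and a $(\rho,2)$-atom $a$ supported in a ball $B=B(x_B,r_B)$, writing $m_Bg:=\frac1{\mu(B)}\int_B g\,d\mu$, I would estimate $\int_\X a\,g\,d\mu$ in two cases. If $r_B<\rho(x_B)$, the cancellation of $a$ and the Cauchy--Schwarz inequality give $\big|\int_\X a\,g\,d\mu\big|=\big|\int_B a\,(g-m_Bg)\,d\mu\big|\le\|a\|_{L^2}\,\|g-m_Bg\|_{L^2(B)}\le C\|g\|_{BMO}$, the last step by John--Nirenberg on $B$. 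If $r_B\ge\rho(x_B)$, i.e. $B\in\mathcal D$, then Cauchy--Schwarz gives $\big|\int_\X a\,g\,d\mu\big|\le\|a\|_{L^2}\,\|g\|_{L^2(B)}\le\big(m_B|g|^2\big)^{1/2}\le C\|g\|_{BMO_\rho}$, using John--Nirenberg on $B$ together with $m_B|g|\le\|g\|_{BMO_\rho}$ for $B\in\mathcal D$. A routine argument with the atomic decomposition then shows that $\ell_g(f):=\sum_i\lambda_i\int_\X a_i\,g\,d\mu$ is a well-defined element of $(H^1_\rho(\X))^*$ with $\|\ell_g\|\le C\|g\|_{BMO_\rho}$.

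\emph{Step 2: $(H^1_\rho(\X))^*\hookrightarrow BMO_\rho(\X)$.} Let $\ell\in(H^1_\rho(\X))^*$. Pick radii $R_1<R_2<\cdots\uparrow\infty$ with $R_1\ge\rho(x_0)$, so each ball $B_m:=B(x_0,R_m)$ lies in $\mathcal D$. Any $u\in L^2(B_m)$ is, up to a multiplicative constant, a $(\rho,2)$-atom on $B_m$ (no cancellation being required, since $B_m\in\mathcal D$), hence $u\in H^1_\rho(\X)$ with $\|u\|_{H^1_\rho}\le C\mu(B_m)^{1/2}\|u\|_{L^2}$; so $\ell|_{L^2(B_m)}$ is bounded, and the Riesz representation theorem gives a unique $\psi_m\in L^2(B_m)$ with $\ell(u)=\int_{B_m}u\,\psi_m\,d\mu$ for all $u\in L^2(B_m)$. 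Uniqueness forces $\psi_{m+1}=\psi_m$ on $B_m$, so the $\psi_m$ patch to a single $g\in L^2_{\rm loc}(\X)$ with $\ell(u)=\int_\X u\,g\,d\mu$ for every compactly supported $u\in L^2(\X)$, in particular for every $(\rho,2)$-atom. Testing $\ell$ against $u=\mu(B)^{-1}\big(\overline{\operatorname{sgn}(g-m_Bg)}-c_B\big)\chi_B$ for an arbitrary ball $B$, with $c_B$ chosen so that $\int_\X u\,d\mu=0$ (a bounded multiple of a $(\rho,2)$-atom with cancellation), yields $m_B|g-m_Bg|\le C\|\ell\|$, hence $\|g\|_{BMO}\le C\|\ell\|$; testing against $u=\mu(B)^{-1}\overline{\operatorname{sgn}g}\,\chi_B$ for $B\in\mathcal D$ (a bounded multiple of a $(\rho,2)$-atom, no cancellation needed) yields $m_B|g|\le C\|\ell\|$. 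Thus $g\in BMO_\rho(\X)$ with $\|g\|_{BMO_\rho}\le C\|\ell\|$; and since $\ell$ and $\ell_g$ agree on every atom, hence on finite atomic sums, hence on $H^1_\rho(\X)$ by density, we conclude $\ell=\ell_g$, which gives the asserted duality.

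\emph{On the main difficulty.} The core input — the atomic decomposition of $H^1_\rho(\X)$ — is imported from \cite{YZ}; granted it, the rest follows the classical $H^1$--$BMO$ template. The one place where the localizing function $\rho$ genuinely intervenes is the patching in Step 2, and the device that makes it painless is to exhaust $\X$ by balls of $\mathcal D$ centered at the fixed point $x_0$: on such balls the local $L^2$-representative of $\ell$ is rigid (no additive-constant ambiguity, precisely because no cancellation is imposed), and every compactly supported $L^2$ function belongs to $H^1_\rho(\X)$, so the Riesz representatives automatically agree on overlaps. The remaining ingredients — the John--Nirenberg inequality on a fixed ball, used in both steps, and the density of finite atomic sums — are standard.
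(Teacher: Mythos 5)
First, for context: the paper does not prove this theorem at all --- it is imported verbatim from \cite{YYZ}. So any self-contained argument is ``a different route,'' and the Coifman--Weiss scheme you follow (pair $BMO_\rho$ against atoms; conversely, represent a functional locally by Riesz representation on $L^2$ of an exhausting sequence of balls and test against normalized sign functions) is indeed the standard and correct template. Step 1 and the patching/density mechanics of Step 2 are fine.

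There is, however, one genuine gap, and it sits exactly where the localized structure matters. Your argument repeatedly needs the uniform bound $\|a\|_{H^1_\rho}\le C$ for \emph{non-cancellative} atoms supported on balls $B=B(x_B,r_B)$ with $r_B\ge\rho(x_B)$: in Step 2 you use it qualitatively to make $\ell|_{L^2(B_m)}$ bounded (there a constant depending on $B_m$ would suffice), but you use it \emph{quantitatively and uniformly over all $B\in\mathcal D$} when you test $\ell$ against $u=\mu(B)^{-1}\overline{\operatorname{sgn}g}\,\chi_B$ to extract $\sup_{B\in\mathcal D}m_B|g|\le C\|\ell\|$. You present this as part of the atomic characterization imported from \cite{YZ}, but the Yang--Zhou atoms (as recalled in Section 4 of this paper) are required to satisfy $r<\rho(x_B)$; functions normalized on large balls of $\mathcal D$ are simply not atoms in that theory, so the ``every atom lies uniformly in $H^1_\rho$'' half of \cite{YZ} does not cover them. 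Nor can you borrow the bound from this paper's Theorem 4.1, case (a), which establishes precisely that estimate for large-ball (log-type) atoms --- because that proof invokes the $H^1_\rho$--$BMO_\rho$ duality you are trying to prove, so the argument would be circular. The claim is true and can be proved directly: cover $B$ by balls $B(y,\rho(y)/2)$ with $y\in B$, extract a bounded-overlap subfamily, split $u$ by a subordinate partition of unity into genuine small-ball atoms needing no cancellation, and use the admissibility of $\rho$ (which gives $\rho(y)\le C r_B$ for $y\in B$ when $r_B\ge\rho(x_B)$) together with doubling to control $\sum_k\mu(B_k)^{1/2}\|u\|_{L^2(B_k)}\le C\mu(B)^{1/2}\|u\|_{L^2}\le C$. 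That covering lemma is the one missing ingredient; with it supplied, your proof closes. (A second, more cosmetic caveat: the well-definedness of $\ell_g(f)=\sum_i\lambda_i\int a_i g$ independently of the chosen decomposition deserves a word --- the usual fix via first taking $g$ bounded, or via density of finite atomic sums --- but that is the standard bookkeeping of Coifman--Weiss duality rather than anything specific to $\rho$.)
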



\section{Proof of Theorems \ref{the first main theorem} and \ref{the second main theorem}}

We begin by recalling the following (see \cite[Proposition 3.1]{YZ}).

\begin{lemma}\label{Yang and Zhou, Proposition 3.1}
	Let $\rho$ be an admissible function. Then, there exists a function $K_\rho: \X\times \X\to \R$ and a positive constant $C$ such that
	\begin{enumerate}[\rm (i)]
		\item $K_\rho(x,y)\geq 0$ for all $x,y\in \X$, and $K_\rho(x,y)=0$ if $d(x,y)>C \min\{\rho(x),\rho(y)\}$;
		\item $K_\rho(x,y) \leq C \frac{1}{\mu(B(x,\rho(x)))+ \mu(B(y,\rho(y)))}$ for all $x,y\in \X$;
		\item $K_\rho(x,y)= K_\rho(y,x)$ for all $x,y\in \X$;		
		\item $|K_\rho(x,y) - K_\rho(x,y')|\leq C \frac{d(y,y')}{\rho(x)} \frac{1}{\mu(B(x,\rho(x)))+ \mu(B(y,\rho(y)))}$ for all $x,y,z\in \X$ with $d(y,y') \leq [\rho(x) + d(x,y)]/2$;		
		\item  for any $x,x',y,y'\in \X$ satisfying $d(x,x') \leq [\rho(y) + d(x,y)]/3$ and $d(y,y') \leq [\rho(x) + d(x,y)]/3$, we have
		\begin{eqnarray*}
			\Big|[K_\rho(x,y) - K_\rho(x,y')] &-& [K_\rho(x',y) - K_\rho(x',y')]\Big|\\
			&\leq& C \frac{d(x,x')}{\rho(y)} \frac{d(y,y')}{\rho(x)} \frac{1}{\mu(B(x,\rho(x)))+ \mu(B(y,\rho(y)))};
		\end{eqnarray*}		
		\item $\int_{\X} K_\rho(x,y) d\mu(x)=1$ for all $y\in \X$.
	\end{enumerate}
\end{lemma}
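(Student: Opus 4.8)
The plan is to construct $K_\rho$ by ``bi-stochastifying'' a partition of unity adapted to the scale function $\rho$; throughout, $\lambda B$ denotes the concentric $\lambda$-dilate of a ball $B$. \textbf{Preliminary reductions.} First I would record the elementary consequence of admissibility that $d(x,y)\le c\,\rho(x)$ forces $C^{-1}\rho(x)\le\rho(y)\le C\rho(x)$ — substitute $d(x,y)\le c\,\rho(x)$ into the defining inequality of $\rho$, then apply it again with the roles of $x,y$ interchanged — and combine this with the doubling property to obtain
$$\mu(B(x,\rho(x)))\approx\mu(B(y,\rho(y)))\approx\mu(B(z,\rho(z)))$$
whenever $x,y,z$ are pairwise within a bounded multiple of $\rho(x)$. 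Next, a Whitney-type covering argument (take a maximal set that is suitably $\rho$-separated, then use Vitali) produces a countable family $\{x_k\}\subset\X$ such that the balls $B_k:=B(x_k,\rho(x_k))$ cover $\X$, the balls $B(x_k,\rho(x_k)/2)$ are pairwise disjoint, and for each fixed $\lambda\ge1$ the family $\{\lambda B_k\}$ has overlap bounded by a constant depending only on $\lambda$ and the structural constants. This quantitative overlap estimate is precisely where admissibility of $\rho$ is used.

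\textbf{The kernel.} To each $k$ attach a bump $\eta_k$ with $\mathbf{1}_{B_k}\le\eta_k\le\mathbf{1}_{2B_k}$ and $|\eta_k(x)-\eta_k(y)|\le C\,d(x,y)/\rho(x_k)$ (if $d$ is only a quasi-metric, first replace it by an equivalent metric as in Mac\'ias--Segovia and build the bumps there), set $S:=\sum_k\eta_k$, so that $1\le S\le C$ with the same local Lipschitz control near each $B_k$, and put $\psi_k:=\eta_k/S$. Then $\sum_k\psi_k\equiv1$, $\operatorname{supp}\psi_k\subset2B_k$, $\psi_k\ge C^{-1}$ on $B_k$, and $|\psi_k(x)-\psi_k(y)|\le C\,d(x,y)/\rho(x_k)$. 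Finally set
$$K_\rho(x,y):=\sum_k\frac{\psi_k(x)\,\psi_k(y)}{a_k},\qquad a_k:=\int_\X\psi_k\,d\mu,$$
and note $a_k\approx\mu(B_k)$ by the ``fatness'' of $\psi_k$ together with doubling. Then (i)--(vi) follow essentially mechanically: positivity and symmetry are immediate; if $K_\rho(x,y)\ne0$ some $k$ has $x,y\in2B_k$, whence $d(x,y)\lesssim\rho(x_k)\approx\rho(x)\approx\rho(y)$ and $a_k\approx\mu(B(x,\rho(x)))\approx\mu(B(y,\rho(y)))$, which, with the overlap bound, gives the support statement in (i) and the size bound (ii); the normalization (vi) reads $\int_\X K_\rho(x,y)\,d\mu(x)=\sum_k\psi_k(y)\,a_k/a_k=\sum_k\psi_k(y)=1$; and from
$$K_\rho(x,y)-K_\rho(x,y')=\sum_k\frac{\psi_k(x)\,[\psi_k(y)-\psi_k(y')]}{a_k}$$
and the telescoped second difference $\sum_k[\psi_k(x)-\psi_k(x')]\,[\psi_k(y)-\psi_k(y')]/a_k$, the Lipschitz bounds on $\psi_k$, the overlap bound, and the comparabilities above, one gets (iv) and (v); in (iv) one distinguishes the cases $d(y,y')\le\rho(x_k)$ and $d(y,y')>\rho(x_k)$, using the trivial bound $|\psi_k(y)-\psi_k(y')|\le1$ in the latter.

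\textbf{Main obstacle.} The work is concentrated in the preliminary steps rather than in the formal verification of (i)--(vi). The first sticking point is the $\rho$-adapted Whitney covering with a quantitative bounded-overlap estimate: this is the only place admissibility of $\rho$ is genuinely needed, and it rests on the stability $\rho(x)\approx\rho(y)$ for $d(x,y)\lesssim\rho(x)$. The second is the bookkeeping of the comparabilities $\rho(x_k)\approx\rho(x)\approx\rho(y)$ and $\mu(B_k)\approx\mu(B(x,\rho(x)))\approx\mu(B(y,\rho(y)))$ on the support of $K_\rho$, which is what converts the $x_k$-centered estimates coming from the partition of unity into the symmetric $x,y$-form demanded by the statement; this needs care in (iv)--(v), where the relevant support also includes points $y'$ that lie near $2B_k$ only through the quasi-triangle inequality. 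The one genuinely fiddly technical point is manufacturing Lipschitz bumps $\eta_k$ in a space carrying merely a quasi-metric, which the Mac\'ias--Segovia metrization disposes of.
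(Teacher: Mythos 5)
The paper does not actually prove this lemma: it is quoted verbatim from \cite[Proposition 3.1]{YZ}, and your construction --- a bounded-overlap covering by the balls $B(x_k,\rho(x_k))$ (using the admissibility-based stability $\rho(x)\approx\rho(y)$ for $d(x,y)\lesssim\rho(x)$), a subordinate partition of unity $\{\psi_k\}$, and the symmetrized, normalized kernel $K_\rho(x,y)=\sum_k\psi_k(x)\psi_k(y)/\int_\X\psi_k\,d\mu$ --- is essentially the construction used there, and your verification of (i)--(vi) is sound. The one point to watch is the regularity in (iv)--(v): the Mac\'ias--Segovia regularization of a quasi-metric only produces H\"older-continuous bumps, so in the raw quasi-metric generality your argument yields (iv)--(v) with $d(y,y')^\theta/\rho(x)^\theta$ in place of $d(y,y')/\rho(x)$; the genuinely Lipschitz form stated here requires $d$ to be (equivalent to) a metric, which is the setting of \cite{YZ}, so this is a caveat inherited from the statement rather than a defect of your proof.
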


Given a function $f$ in $L^1(\X)$, following \cite{YZ}, we define
$$K_\rho(f)(x)= \int_{\X} K_\rho(x,y) f(y) d\mu(y)$$
for all $x\in \X$. It follows directly from Lemma \ref{Yang and Zhou, Proposition 3.1} that
\begin{equation}\label{symmetric of the function K}
\int_{\X} K_\rho(f)(x) g(x) d\mu(x)= \int_{\X} K_\rho(g)(x) f(x) d\mu(x)
\end{equation}
for all $f\in L^1(\X)$ and $g\in L^\infty(\X)$. Moreover, by Remark \ref{compactness related to a ball}, 
\begin{equation}\label{continuous with compact support}
K_\rho(\phi)\in C_c(\X)\;\;\mbox{for all $\phi\in C_c(\X)$},
\end{equation}
and, for any $x\in \X$, the function $\mathbb K_\rho(x,\cdot): \X\to \R$, defined by
\begin{equation}\label{convolution of kernels}
\mathbb K_\rho(x,z):= \int_{\X} K_{\rho}(x,y)K_\rho(y,z)d\mu(y),
\end{equation}
is in $C_c(\X)$. Remark that $K_\rho(K_\rho(f))(x)= \int_{\X} \mathbb K_\rho(x,z)f(z) d\mu(z)$.

The following lemma is due to Yang and Zhou \cite{YZ}.

\begin{lemma}\label{Yang and Zhou, the key lemma}
	There exists a positive constant $C$ such that
	\begin{enumerate}[\rm (i)]
		\item for any $f\in L^1(\X)$,
		$$\|K_\rho(f)\|_{H^1_\rho} \leq C \|f\|_{L^1};$$
		\item for any $g\in H^1_\rho(\X)$,
		$$\|g- K_\rho(g)\|_{H^1} \leq C \|g\|_{H^1_\rho}.$$	
	\end{enumerate}
\end{lemma}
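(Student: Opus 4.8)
Both statements go back to Yang and Zhou \cite{YZ}; they are dual to each other and, in the end, reduce to the pointwise properties of $K_\rho$ collected in Lemma \ref{Yang and Zhou, Proposition 3.1}, combined with the admissibility inequality for $\rho$ and the doubling property. The plan is as follows.

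\textbf{Part (i).} The idea is to estimate $\M_\rho(K_\rho f)$ by duality. Fix $x\in\X$ and a test function $\varphi$ with $\|\varphi\|_{\mathcal G(x,r,\beta,\gamma)}\le1$ for some $r\in(0,\rho(x))$. Since $f\in L^1(\X)$, Fubini's theorem and the symmetry $K_\rho(x,y)=K_\rho(y,x)$ give $\la K_\rho(f),\varphi\ra=\int_\X f(z)\,K_\rho(\varphi)(z)\,d\mu(z)$, hence
$$\M_\rho(K_\rho f)(x)\le\int_\X|f(z)|\,F(x,z)\,d\mu(z),$$
where $F(x,z)$ is the supremum of $|K_\rho(\varphi)(z)|$ over all $\varphi$ as above. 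By Tonelli's theorem it then suffices to prove the uniform tail estimate $\int_\X F(x,z)\,d\mu(x)\le C$ for each $z\in\X$. I would split this integral into the region $d(x,z)\le A(\rho(x)+\rho(z))$ and its complement, with $A$ a large geometric constant. On the first (near) region, Lemma \ref{Yang and Zhou, Proposition 3.1}(i)--(ii) and the routine bound $\|\varphi\|_{L^1}\le C$ give $F(x,z)\le C/\mu(B(z,\rho(z)))$, while admissibility forces this region to lie inside a fixed dilate of $B(z,\rho(z))$, so doubling closes this part. On the far region, Lemma \ref{Yang and Zhou, Proposition 3.1}(i) confines the variable of integration defining $K_\rho(\varphi)(z)$ to $B(z,C\rho(z))$, so $d(x,y)\approx d(x,z)$ there, and the decay of $\varphi$ yields $F(x,z)\le C\,\mu(B(z,d(x,z)))^{-1}(\rho(x)/d(x,z))^{\gamma}$; replacing $\rho(x)$ by $C\rho(z)^{1/(1+k_0)}d(x,z)^{k_0/(1+k_0)}$ via admissibility turns the exponent into $\gamma/(1+k_0)$, and a dyadic decomposition of $\{d(x,z)>A\rho(z)\}$ with doubling produces a convergent geometric series. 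Adding the two contributions gives $\int_\X F(x,z)\,d\mu(x)\le C$, hence $\|K_\rho(f)\|_{H^1_\rho}\le C\|f\|_{L^1}$.

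\textbf{Part (ii).} Since $g\in H^1_\rho(\X)\ss L^1(\X)$, $K_\rho(g)$ is a well-defined function, and \eqref{symmetric of the function K} gives $\la g-K_\rho(g),\varphi\ra=\la g,\varphi-K_\rho(\varphi)\ra$ for every test function $\varphi$, while Lemma \ref{Yang and Zhou, Proposition 3.1}(vi) gives the cancellation identity
$$\varphi(w)-K_\rho(\varphi)(w)=\int_{d(w,y)\le C\rho(w)}K_\rho(w,y)\,[\varphi(w)-\varphi(y)]\,d\mu(y).$$
To bound $\M(g-K_\rho g)(x)$ I would separate the scale $r$ of $\varphi$. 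If $r\le\Lambda\rho(x)$ for a fixed large $\Lambda$, the norm equivalence between $\mathcal G(x,r,\beta,\gamma)$ and $\mathcal G(x,\rho(x)/2,\beta,\gamma)$ makes $\varphi$ (up to a constant) admissible for $\M_\rho$ at $x$, and Lemma \ref{Yang and Zhou, Proposition 3.1}(i)--(iv) with admissibility does the same for $K_\rho(\varphi)$, so $|\la g-K_\rho(g),\varphi\ra|\le C\,\M_\rho(g)(x)$. If $r>\Lambda\rho(x)$, then for $\Lambda$ large the smoothness estimate Definition \ref{definition for test functions}(ii) applies on each ball $B(w,C\rho(w))$, and inserting $\rho(w)\le C\rho(x)^{1/(1+k_0)}(\rho(x)+d(x,w))^{k_0/(1+k_0)}$ yields the $L^1$-normalized decay bound
$$|\varphi(w)-K_\rho(\varphi)(w)|\le C\Big(\frac{\rho(x)}{r+d(x,w)}\Big)^{\beta/(1+k_0)}\frac{1}{V_r(x)+V(x,w)}\Big(\frac{r}{r+d(x,w)}\Big)^{\gamma},$$
together with a matching Hölder estimate from Lemma \ref{Yang and Zhou, Proposition 3.1}(iv), obtained after grouping the terms of $[\varphi-K_\rho(\varphi)](w)-[\varphi-K_\rho(\varphi)](w')$ by means of the cancellation (vi). Using a partition of unity adapted to $\rho$, these estimates let one write $\varphi-K_\rho(\varphi)=\sum_Q\lambda_Q b_Q$, where each $b_Q$ is a normalized bump supported in a ball $B_Q$ of radius comparable to $\rho$ at its center $z_Q$ --- hence admissible for $\M_\rho$ at every point of $B_Q$ --- and $|\lambda_Q|\le C\,v_r(x,z_Q)\,\mu(B_Q)$ with $v_r(x,y)$ the right-hand side of the last display; pairing against $g$ one gets $\M(g-K_\rho g)(x)\le C\,\M_\rho(g)(x)+C\sup_{r>\Lambda\rho(x)}\int_\X v_r(x,y)\,\M_\rho(g)(y)\,d\mu(y)$. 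A dual tail estimate $\int_\X\big(\sup_{r>\Lambda\rho(x)}v_r(x,y)\big)\,d\mu(x)\le C$, proved exactly as in Part (i) from admissibility and doubling, then gives, via Fubini, $\|g-K_\rho(g)\|_{H^1}\le C\|\M_\rho(g)\|_{L^1}=C\|g\|_{H^1_\rho}$.

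\textbf{The main obstacle.} In Part (i) the heart of the matter is the uniform tail bound $\int_\X F(x,z)\,d\mu(x)\le C$, in which admissibility has to be used both to trap the near region inside a dilate of $B(z,\rho(z))$ and to improve the decay exponent from $\gamma$ to $\gamma/(1+k_0)$ on the far region. In Part (ii) the delicate point is the regime $r\gg\rho(x)$: although $\varphi$ lives at the large scale $r$, one must combine the cancellation $\int_\X K_\rho(w,y)\,d\mu(y)=1$ with the admissibility control of $\rho(w)$ to see that $\varphi-K_\rho(\varphi)$ decomposes into $\rho$-localized bumps, so that its pairing with $g$ is governed by $\M_\rho(g)$ and not by the (non-integrable) full grand maximal function $\M(g)$; carefully reconciling the smoothness scale $r$ of $\varphi$ with the smoothing scale $\rho$ is exactly the step where the admissibility of $\rho$ is indispensable.
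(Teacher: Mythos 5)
You should first be aware that the paper does not prove this lemma at all: it is quoted verbatim from Yang and Zhou \cite{YZ} (``The following lemma is due to Yang and Zhou''), so there is no in-paper argument to compare against. Judged on its own terms, your sketch is a sound reconstruction of the standard proof and uses exactly the ingredients that \cite{YZ} uses, namely the kernel estimates of Lemma \ref{Yang and Zhou, Proposition 3.1}, the admissibility inequality, and doubling. In Part (i) the reduction to the uniform bound $\int_\X F(x,z)\,d\mu(x)\le C$, the trapping of the near region inside $B(z,C\rho(z))$ via admissibility, and the exponent improvement $\gamma\mapsto\gamma/(1+k_0)$ on the far region are all correct and are the genuine content of the estimate. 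In Part (ii) the splitting at $r\approx\Lambda\rho(x)$, the cancellation identity $\varphi-K_\rho(\varphi)=\int K_\rho(\cdot,y)[\varphi(\cdot)-\varphi(y)]\,d\mu(y)$ coming from Lemma \ref{Yang and Zhou, Proposition 3.1}(iii) and (vi), and the observation that $\rho(w)\ll r+d(x,w)$ (so that the H\"older condition of Definition \ref{definition for test functions}(ii) is applicable) are the right mechanism.

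Two places in Part (ii) are left thinner than the rest and would need to be written out in a complete proof. First, in the regime $r\le\Lambda\rho(x)$ you must actually verify that $K_\rho(\varphi)$ is a bounded multiple of a test function admissible for $\M_\rho$ at $x$; the size bound follows from $\int_\X K_\rho(w,y)\,d\mu(y)=1$ together with the decay of $\varphi$ and the admissibility control of $\rho(w)$, but the matching H\"older estimate from Lemma \ref{Yang and Zhou, Proposition 3.1}(iv) and the membership in $\mathcal G^\epsilon_0(\beta,\gamma)$ require a page of routine but nontrivial checking. Second, the decomposition of $\varphi-K_\rho(\varphi)$ into $\rho$-localized bumps $\sum_Q\lambda_Q b_Q$ via a partition of unity is asserted rather than constructed; one must check that each $b_Q$ has the correct $\beta$-H\"older regularity at scale $\rho(z_Q)$ (inherited from $\varphi$ at the larger scale $r$ and from $K_\rho$ via (iv)), that no cancellation on $b_Q$ is needed because $\M_\rho$ admits atoms without vanishing moment at scale $\rho$, and that the coefficients really satisfy $|\lambda_Q|\lesssim v_r(x,z_Q)\mu(B_Q)$ with the stated $v_r$. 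Neither point is an obstruction --- both are standard on RD-spaces --- but as written they are the gaps between your outline and a complete proof.
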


As a consequence of Lemma \ref{Yang and Zhou, the key lemma} and (\ref{symmetric of the function K}), for any $\phi\in C_c(\X)$,
\begin{equation}\label{from big bmo to small bmo}
\|\phi - K_\rho(K_\rho(\phi))\|_{BMO_\rho}\leq C \|\phi\|_{BMO}.
\end{equation}

Now we are ready to give the proofs of Theorems \ref{the first main theorem} and  \ref{the second main theorem}.

\begin{proof}[Proof of Theorem \ref{the first main theorem}]
	Since $VMO_\rho(\X)$ is a subspace of  $BMO_\rho(\X)$, which is the dual space of $H^1_\rho(\X)$, every function $f$ in $H^1_\rho(\X)$ determines a bounded linear functional on $VMO_\rho(\X)$ of norm bounded by $\|f\|_{H^1_\rho}$.
	
	Conversely, given a bounded linear functional $L$ on $VMO_\rho(\X)$. Then, 
	$$|L(\phi)|\leq \|L\| \|\phi\|_{VMO_\rho}\leq \|L\| \|\phi\|_{L^\infty}$$
	for all $\phi\in C_c(\X)$. This implies (see \cite{Ro}) that there exists a finite signed Radon measure $\nu$ on $\X$ such that, for any $\phi\in C_c(\X)$, 
	$$L(\phi)= \int_{\X} \phi(x) d\nu(x),$$
	moreover,  the total variation of $\nu$, $|\nu|(\X)$,  is bounded by $\|L\|$. Therefore, 
	\begin{equation}\label{Dafni 1, 1}
	\|K_{\rho}(K_\rho(\nu))\|_{H^1_\rho} \leq C \|K_\rho(\nu)\|_{L^1}\leq C |\nu|(\X)\leq C \|L\|
	\end{equation}
	by Lemma \ref{Yang and Zhou, the key lemma}, where $K_\rho(\nu)(x):= \int_{\X} K_\rho(x,y) d\nu(y)$ for all $x\in \X$.

	 On the other hand, by (\ref{from big bmo to small bmo}) and (\ref{continuous with compact support}), we have
	\begin{eqnarray*}
		|(L- K_\rho(K_\rho(L)))(\phi)| &=&|L(\phi - K_\rho(K_\rho(\phi)))|\\
		&\leq& \|L\| \|\phi - K_\rho(K_\rho(\phi))\|_{VMO_\rho}\\
		&\leq& C \|L\| \|\phi\|_{BMO}
	\end{eqnarray*}
	for all $\phi\in C_c(\X)$, where $K_\rho(K_\rho(L))(\phi):= \int_{\X} K_\rho(K_\rho(\nu))(x) \phi(x)d\mu(x)$. Consequently, by Theorem \ref{Coifman-Weiss}(ii), there exists a function $h$ belongs $H^1(\X)$ such that $\|h\|_{H^1}\leq C \|L\|$ and
	$$(L- K_\rho(K_\rho(L)))(\phi)= \int_{\X} h(x) \phi(x) d\mu(x)$$
	for all  $\phi\in C_c(\X)$. This, together with (\ref{Dafni 1, 1}), allows us to conclude that
	$$L(\phi)= \int_{\R^d} f(x) \phi(x) d\mu(x)$$
	for all $\phi\in C_c(\X)$, where $f:= h+ K_{\rho}(K_\rho(\nu))$ is in $H^1_\rho(\X)$ and satisfies that $\|f\|_{H^1_\rho}\leq \|h\|_{H^1_\rho} + \|K_{\rho}(K_\rho(\nu))\|_{H^1_\rho}\leq C \|L\|$. The proof of Theorem \ref{the first main theorem} is thus completed.
\end{proof}

\begin{proof}[Proof of Theorem \ref{the second main theorem}]
	Let $\{f_{n_k}\}_{k=1}^\infty$ be an arbitrary subsequence of $\{f_n\}_{n=1}^\infty$.	As $\{f_{n_k}\}_{k=1}^\infty$ is a bounded sequence in $H^1_\rho(\X)$, by Theorem \ref{the first main theorem} and the Banach-Alaoglu theorem, there exists a subsequence $\{f_{n_{k_j}}\}_{j=1}^\infty$ of $\{f_{n_k}\}_{k=1}^\infty$ such that $\{f_{n_{k_j}}\}_{j=1}^\infty$ weak$^*$-converges to $g$ for some $g\in H^1_\rho(\X)$. Therefore, by (\ref{convolution of kernels}), for any  $x\in \X$,
	\begin{eqnarray*}	
	\lim_{j\to\infty} K_\rho(K_\rho(f_{n_{k_j}}))(x) &=& \lim_{j\to\infty} \int_{\X} \mathbb K_\rho(x,z) f_{n_{k_j}}(z) d\mu(z)\\
	&=& \int_{\X} \mathbb K_\rho(x,z) g(z) d\mu(z)	 = K_\rho(K_\rho(g))(x).
	\end{eqnarray*}
	This implies that $\lim_{j\to\infty}[f_{n_{k_j}}(x) - K_\rho(K_\rho(f_{n_{k_j}}))(x)]= f(x) - K_\rho(K_\rho(g))(x)$ for almost every $x\in\X$. Hence, by Lemma \ref{Yang and Zhou, the key lemma} and \cite[Theorem 1.1]{HK},  
	$$\|f- K_\rho(K_\rho(g))\|_{H^1}\leq \sup_{j\geq 1}\|f_{n_{k_j}} - K_\rho(K_\rho(f_{n_{k_j}})\|_{H^1}\leq C \sup_{j\geq 1} \|f_{n_{k_j}}\|_{H^1_\rho}<\infty,$$
	moreover,
	$$\lim_{j\to\infty} \int_{\X} [f_{n_{k_j}}(x) - K_\rho(K_\rho(f_{n_{k_j}}))(x)]\phi(x)d\mu(x) =  \int_{\X}  [f(x) - K_\rho(K_\rho(g))(x)]\phi(x) d\mu(x)$$
	for all $\phi\in C_c(\X)$. As a consequence, we obtain that
	\begin{eqnarray*}
		\|f\|_{H^1_\rho} &\leq& \|f- K_\rho(K_\rho(g))\|_{H^1_\rho} + \|K_\rho(K_\rho(g))\|_{H^1_\rho}\\
		&\leq& C\|f- K_\rho(K_\rho(g))\|_{H^1} + C\|g\|_{H^1_\rho}\\
		&\leq& C \sup_{j\geq 1} \|f_{n_{k_j}}\|_{H^1_\rho}<\infty,
	\end{eqnarray*}
	moreover, by $\{f_{n_{k_j}}\}_{j=1}^\infty$ weak$^*$-converges to $g$ in $H^1_\rho(\X)$, (\ref{symmetric of the function K}) and (\ref{continuous with compact support}),
	\begin{eqnarray*}
		&&\lim_{j\to\infty} \int_{\X} f_{n_{k_j}}(x)\phi(x)d\mu(x) \\
		&=& \lim_{j\to\infty} \int_{\X} [f_{n_{k_j}}(x) - K_\rho(K_\rho(f_{n_{k_j}}))(x)]\phi(x)d\mu(x) + \lim_{j\to\infty} \int_{\X} f_{n_{k_j}}(x) K_\rho(K_\rho(\phi))(x)d\mu(x)\\
		&=& \int_{\X}  [f(x) - K_\rho(K_\rho(g))(x)]\phi(x) d\mu(x) + \int_{\X} g(x)  K_\rho(K_\rho(\phi))(x)d\mu(x)\\
		&=& \int_{\X} f(x)\phi(x)d\mu(x).
	\end{eqnarray*}
	This, by $\{f_{n_k}\}_{k=1}^\infty$ be an arbitrary subsequence of $\{f_n\}_{n=1}^\infty$,  allows us to complete the proof of Theorem \ref{the second main theorem}.
\end{proof}

\section{An application}

The purpose of this section is to give an atomic characterization of  $H^1_\rho(\X)$ by using Theorems \ref{the first main theorem} and \ref{the second main theorem}. First, we define the concept of atoms of log-type.

\begin{definition}
	Given $1<q\leq \infty$. A measurable function $a$ is called an $(H^1_\rho,q)$-atom of log-type related to the ball $ B(x_0,r)$ if 
	\begin{enumerate}[\rm (i)]
		\item  supp $a\subset B(x_0,r)$,
		
		\item $\|a\|_{L^q(\X)}\leq [\mu(B(x_0,r))]^{1/q-1}$,
		
		\item $\left|\int_{\X}a(x)d\mu(x)\right| \leq \frac{1}{\log\left(e+ \frac{\rho(x_0)}{r}\right)}$.
	\end{enumerate}
\end{definition}

The main result in this section can be read as follows:

\begin{theorem}\label{the atomic characterization}
	Let $1<q\leq \infty$. A function $f$ is in $H^1_\rho(\X)$ if and only if it can be written as $f=\sum_j \lambda_j a_j$, where $a_j$ are $(H^1_\rho,q)$-atoms of log-type and $\sum_j |\lambda_j|<\infty$. Moreover, there exists a constant $C>0$ such that, for any $f\in H^1_\rho(\X)$,
	$$\|f\|_{H^1_\rho}\leq C \inf\left\{\sum_j |\lambda_j|: f=\sum_j \lambda_j a_j\right\}\leq C \|f\|_{H^1_\rho}.$$
\end{theorem}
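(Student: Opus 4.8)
The plan is to prove the two inclusions separately, with the ``only if'' direction (decomposing an arbitrary $f\in H^1_\rho(\X)$ into atoms of log-type) being the substantive part, and the ``if'' direction (every sum $\sum_j\lambda_j a_j$ of such atoms lies in $H^1_\rho$ with controlled norm) being the easier half. For the easy direction I would first show that a single $(H^1_\rho,q)$-atom $a$ of log-type associated to $B(x_0,r)$ satisfies $\|a\|_{H^1_\rho}\leq C$ uniformly. Here one splits into two regimes. If $r\geq \rho(x_0)$, then the ball is ``large'' (in $\mathcal D$), the vanishing-moment condition (iii) gives no real gain but is not needed either: using the size bound (ii), Hölder's inequality, and the definition of $\M_\rho$ (which only tests against $\varphi$ with scale $r'\in(0,\rho(x))$), one estimates $\|\M_\rho(a)\|_{L^1}$ directly over a controlled dilate of $B(x_0,r)$ and over its complement, exploiting the decay $(\rho(x)/(\rho(x)+d(x_0,x)))^\gamma$ built into the test functions together with the admissibility of $\rho$. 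If $r<\rho(x_0)$, then $\log(e+\rho(x_0)/r)$ is genuinely large, condition (iii) provides the decisive cancellation, and one runs the classical Coifman--Weiss style argument showing a $q$-atom with \emph{approximate} cancellation of size $1/\log(e+\rho(x_0)/r)$ still has $H^1_\rho$-norm $\lesssim 1$ — the logarithmic deficit in the moment exactly compensates the logarithmic blow-up one would otherwise pick up from the tail. Summing, $\|\sum_j\lambda_j a_j\|_{H^1_\rho}\leq\sum_j|\lambda_j|\,\|a_j\|_{H^1_\rho}\leq C\sum_j|\lambda_j|$, which is the right-hand inequality.

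For the ``only if'' direction, the strategy is to combine the known atomic decomposition of the classical space $H^1(\X)$ with the operator $K_\rho$ and Lemma~\ref{Yang and Zhou, the key lemma}. Given $f\in H^1_\rho(\X)$, write $f=(f-K_\rho(K_\rho(f)))+K_\rho(K_\rho(f))$. By Lemma~\ref{Yang and Zhou, the key lemma}(ii) applied twice (and the bound $\|K_\rho(g)\|_{H^1_\rho}\lesssim\|g\|_{L^1}\lesssim\|g\|_{H^1_\rho}$), the first summand $g_1:=f-K_\rho(K_\rho(f))$ lies in $H^1(\X)$ with $\|g_1\|_{H^1}\lesssim\|f\|_{H^1_\rho}$, hence by the Coifman--Weiss atomic decomposition it is a sum $\sum_j\lambda_j b_j$ of classical $(H^1,q)$-atoms with $\sum_j|\lambda_j|\lesssim\|f\|_{H^1_\rho}$. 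A classical $q$-atom has \emph{exact} cancellation, so in particular it is an $(H^1_\rho,q)$-atom of log-type (condition (iii) holds with the right-hand side replaced by $0$). The second summand $g_2:=K_\rho(K_\rho(f))$ must be decomposed into log-type atoms directly. Here I would use that $\mathbb K_\rho(x,z)$ is supported (in both variables) in the region $d(x,z)\lesssim\min\{\rho(x),\rho(z)\}$ and satisfies the size/regularity bounds inherited from Lemma~\ref{Yang and Zhou, Proposition 3.1}; a Calderón--Zygmund / partition-of-unity argument adapted to the ``critical scale'' $\rho$ — of the type already used by Yang and Zhou and by Yang--Yang--Zhou — produces a decomposition $g_2=\sum_j\mu_j c_j$ where each $c_j$ is supported in a ball of radius comparable to $\rho$ at its center, has the correct $L^q$ size, and has a log-type moment bound coming from $\int_\X K_\rho(x,y)\,d\mu(x)=1$ (Lemma~\ref{Yang and Zhou, Proposition 3.1}(vi)); one also uses that $\|g_2\|_{H^1_\rho}\lesssim\|f\|_{H^1_\rho}$ to control $\sum_j|\mu_j|$.

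The step I expect to be the main obstacle is producing the log-type atomic decomposition of $g_2=K_\rho(K_\rho(f))$ with the \emph{precise} moment bound $|\int c_j\,d\mu|\lesssim 1/\log(e+\rho(x_{c_j})/r_j)$ and with summability $\sum_j|\mu_j|\lesssim\|f\|_{H^1_\rho}$; naively one only gets $|\int c_j\,d\mu|\lesssim 1$, which is too weak, and extracting the logarithmic gain requires quantifying how much mass of the kernel $K_\rho(x,y)$ escapes a ball of radius $r<\rho(y)$, using admissibility of $\rho$ to see that this escaping mass is $\lesssim 1/\log(e+\rho(y)/r)$. An alternative route to the whole ``only if'' direction, which may be cleaner, is to use Theorems~\ref{the first main theorem} and \ref{the second main theorem} directly: one shows that finite linear combinations of log-type atoms are dense in $H^1_\rho(\X)$ by a duality argument — if $\varphi\in BMO_\rho(\X)$ annihilates every log-type atom, then a quantitative estimate on $\|\varphi\|_{BMO_\rho}$ forces $\varphi=0$ as an element of $(VMO_\rho)^*=\ldots$, wait, rather one tests against the $VMO_\rho$ predual and uses weak$^*$-density — and then the open mapping theorem upgrades density to the norm inequality. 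I would pursue the direct decomposition first and fall back on the duality/weak$^*$ argument (which is presumably why this is placed in the paper as an ``application'' of Theorems~\ref{the first main theorem} and \ref{the second main theorem}) if the kernel estimates become unwieldy.
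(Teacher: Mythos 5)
Your proposal for the ``if'' direction (a uniform bound $\|a\|_{H^1_\rho}\leq C$ for a single log-type atom, split according to $r\geq\rho(x_0)$ versus $r<\rho(x_0)$, followed by summation) matches the paper's structure, and your heuristic that the $1/\log(e+\rho(x_0)/r)$ moment deficit exactly cancels the logarithmic tail is the correct mechanism. However, you propose to estimate $\|\M_\rho(a)\|_{L^1}$ directly, whereas the paper runs this step entirely by duality: by Theorem \ref{the first main theorem} it suffices to bound $|\int_\X a\phi\,d\mu|$ by $C\|\phi\|_{BMO_\rho}$ for $\phi\in C_c(\X)$, which in the small-ball case reduces to H\"older plus the estimate $|\phi_{B(x_0,r)}|\leq C\log(e+\rho(x_0)/r)\,\|\phi\|_{BMO_\rho}$ (quoted from Lin--Yang), and Theorem \ref{the second main theorem} is then what passes from the uniformly bounded partial sums, which converge a.e., to $f\in H^1_\rho$ with $\|f\|_{H^1_\rho}\leq C\sum_j|\lambda_j|$. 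Your direct maximal-function route is viable (the tail integral $\int_{2\kappa r<d(x,x_0)\lesssim\rho(x_0)}V(x,x_0)^{-1}d\mu(x)\approx\log(\rho(x_0)/r)$ is the dual of the Lin--Yang bound), but it forgoes exactly the ``application'' of Theorems \ref{the first main theorem} and \ref{the second main theorem} that motivates the section, and you would still need either Theorem \ref{the second main theorem} or completeness of $H^1_\rho$ to justify passing the uniform atom bound to the infinite sum.

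The genuine gap is in your ``only if'' direction. You reduce to decomposing $g_2=K_\rho(K_\rho(f))$ into log-type atoms by a Calder\'on--Zygmund/partition-of-unity construction, and you yourself flag that the decisive step --- obtaining the moment bound $|\int c_j\,d\mu|\lesssim 1/\log(e+\rho(x_{c_j})/r_j)$ together with $\sum_j|\mu_j|\lesssim\|f\|_{H^1_\rho}$ --- is unresolved; as written this half of the proof does not exist. The missing observation is that no new construction is needed at all: Yang--Zhou already proved (their Theorem 3.2) that every $f\in H^1_\rho(\X)$ decomposes into $(H^1_\rho,q)$-atoms in their sense, i.e.\ atoms supported in $B(x_0,r)$ with $r<\rho(x_0)$ having exact cancellation when $r<\rho(x_0)/4$ and no cancellation requirement when $\rho(x_0)/4\leq r<\rho(x_0)$. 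In the latter regime one trivially has $|\int a\,d\mu|\leq\|a\|_{L^1}\leq 1\leq\log(e+4)/\log(e+\rho(x_0)/r)$, so every Yang--Zhou atom is $\log(e+4)$ times a log-type atom (Remark \ref{atom implies atom of log-type} in the paper). Hence the entire ``only if'' direction, including the left-hand norm inequality, is a one-line citation, and your decomposition $f=(f-K_\rho(K_\rho f))+K_\rho(K_\rho f)$ --- together with the delicate kernel-mass estimate you anticipate --- is unnecessary. Without either that citation or a completed version of your construction, the proposal proves only one of the two inclusions.
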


Before giving the proof of Theorem \ref{the atomic characterization}, let us recall the definition of $H^1_\rho$ atoms introduced by Yang and Zhou \cite{YZ}.

\begin{definition}
	Given $1<q\leq \infty$. A measurable function $a$ is called an $(H^1_\rho,q)$-atom related to the ball $ B(x_0,r)$ if $r< \rho(x_0) $ and
	\begin{enumerate}[\rm (i)]
\item  supp $a\subset B(x_0,r)$,
	
\item $\|a\|_{L^q(\X)}\leq [\mu(B(x_0,r))]^{1/q-1}$,
	
\item if $r< \rho(x_0)/4$, then $\int_{\X}a(x)d\mu(x)=0$.
	\end{enumerate}
\end{definition}

\begin{remark}\label{atom implies atom of log-type}
If $a$ is an $(H^1_\rho,q)$-atom, then $\frac{1}{\log(e+4)}a$ is an $(H^1_\rho,q)$-atom of log-type.
\end{remark}

%

\begin{proof}[Proof of Theorem \ref{the atomic characterization}]
By Remark \ref{atom implies atom of log-type} and \cite[Theorems 3.2]{YZ}, it suffices to prove that there exists a constant $C>0$ such that if $f$ can be written as $f=\sum_j \lambda_j a_j$, where $a_j$ are $(H^1_\rho,q)$-atoms of log-type related to the balls $B(x_j, r_j)$ and $\sum_j |\lambda_j|<\infty$, then $\|f\|_{H^1_\rho} \leq C \sum_{j} |\lambda_j|$. Since Theorem  \ref{the second main theorem}, we only need to prove that
$$
\|a_j\|_{H^1_\rho}\leq C
$$
for all $j$. This is reduced to showing that, for any $\phi\in C_c(\X)$,
\begin{equation}\label{the atomic characterization, 1}
\left|\int_{\X} a_j(x) \phi(x) d\mu(x) \right|\leq C \|\phi\|_{BMO_\rho}
\end{equation}
by Theorem \ref{the first main theorem}. To prove (\ref{the atomic characterization, 1}), let us consider the following two cases:
\begin{enumerate}[(a)]
	\item The case: $r_j\geq \rho(x_j)$. Then, by the H\"older inequality and \cite[Lemma 2.2]{YZ},
	\begin{eqnarray*}
	\left|\int_{\X} a_j(x) \phi(x) d\mu(x) \right| &\leq& \|a_j\|_{L^q(B(x_j,r_j))} \|\phi\|_{L^{q'}(B(x_j,r_j))}\\
	&\leq& [\mu(B(x_j,r_j))]^{1/q-1} C [\mu(B(x_j,r_j))]^{1/q'} \|\phi\|_{BMO_\rho}\\
	&\leq& C \|\phi\|_{BMO_\rho}
	\end{eqnarray*}
	where and hereafter $1/q'+1/q=1$.
	\item The case: $r_j< \rho(x_j)$. Then, by the H\"older inequality, \cite[Lemma 2.2]{YZ} and \cite[Lemma 2.1]{LY},
	\begin{eqnarray*}
	\left|\int_{\X} a_j(x) \phi(x) d\mu(x) \right|	 &\leq& \left|\int_{\X} a_j(\phi- \phi_{B(x_j,r_j)}) d\mu \right| + |\phi_{B(x_j,r_j)}| \left|\int_{\X} a_jd\mu \right|\\
	&\leq&  \|a_j\|_{L^q(B(x_j,r_j))} \|\phi- \phi_{B(x_j,r_j)}\|_{L^{q'}(B(x_j,r_j))} + C \|\phi\|_{BMO_\rho}\\
	&\leq& [\mu(B(x_j,r_j))]^{1/q-1} C [\mu(B(x_j,r_j))]^{1/q'} \|\phi\|_{BMO_\rho}+ C \|\phi\|_{BMO_\rho}\\
	&\leq& C \|\phi\|_{BMO_\rho},
	\end{eqnarray*}
	where $\phi_{B(x_j,r_j)}:= \frac{1}{\mu(B(x_j,r_j))}\int_{B(x_j,r_j)} \phi d\mu$. This ends the proof of Theorem \ref{the atomic characterization}.
\end{enumerate}

\end{proof}

{\bf Acknowledgements.} The paper was completed when the third author was visiting
to Vietnam Institute for Advanced Study in Mathematics (VIASM). He would like
to thank the VIASM for financial support and hospitality.

\bigskip

\noindent Dinh Thanh Duc

\medskip

\noindent Department of Mathematics,
University of Quy Nhon,
170 An Duong Vuong,
Quy Nhon, Binh Dinh, Vietnam
\smallskip

\noindent {\it E-mail}: \texttt{dinhthanhduc@qnu.edu.vn}

\bigskip

\noindent Ha Duy Hung

\medskip

\noindent High School for Gifted Students,
Hanoi National University of Education, 136 Xuan Thuy, Hanoi, Vietnam

\smallskip

\noindent {\it E-mail:} \texttt{hunghaduy@gmail.com}

\bigskip

\noindent Luong Dang Ky

\medskip

\noindent Department of Mathematics,
University of Quy Nhon,
170 An Duong Vuong,
Quy Nhon, Binh Dinh, Vietnam
\smallskip

\noindent {\it E-mail}: \texttt{luongdangky@qnu.edu.vn}


\begin{thebibliography}{MTW1}
\bibitem{CLMS} R. R. Coifman, P.-L. Lions, Y. Meyer and S. Semmes, Compensated compactness and Hardy spaces. J. Math. Pures Appl. (9) 72 (1993), no. 3, 247--286.

\bibitem{CW} R. R. Coifman and G. Weiss,  Extensions of Hardy spaces and their use in analysis. Bull. Amer. Math. Soc. 83 (1977), no. 4, 569-645.

\bibitem{GLY} L. Grafakos, L. Liu and D. Yang,  Maximal function characterizations of Hardy spaces on RD-spaces and their applications. Sci. China Ser. A 51 (2008), no. 12, 2253--2284. 

\bibitem{HMY2} Y. Han, D. M\"uller and D. Yang,  A  theory  of  Besov  and  Triebel–Lizorkin  spaces  on  metric  measure  spaces  modeled  on Carnot-Carath\'eodory spaces, Abstr. Appl. Anal. (2008), art. ID 893409, 250 pp.

\bibitem{HK} H. D. Hung and L. D. Ky, On weak$^*$-convergence in the Hardy space $H^1$ over spaces of homogeneous type, arXiv:1510.01019.

\bibitem{HHK} H. D. Hung, D. Q. Huy and L. D. Ky, A note on weak$^*$-convergence in $h^1(\R^d)$,  arXiv:1510.03597.

\bibitem{JJ} P. W. Jones and J-L. Journ\'e, On weak convergence in $H^1({\bf R}^d)$.
Proc. Amer. Math. Soc. 120 (1994), no. 1, 137-138. 

\bibitem{Ky1} L. D. Ky, Bilinear decompositions and commutators of singular integral operators.  Trans. Amer. Math. Soc. 365 (2013), no. 6, 2931--2958.

\bibitem{Ky2} L. D. Ky, On weak$^*$-convergence in $H^1_L(\mathbb R^d)$.  Potential Anal. 39 (2013), no. 4, 355--368.

\bibitem{Ky3} L. D. Ky, Endpoint estimates for commutators of singular integrals related to Schr\"odinger operators, Rev. Mat. Iberoam. (to appear) or arXiv:1203.6335.

\bibitem{LY} H. Lin and D. Yang,  Pointwise multipliers for localized Morrey-Campanato spaces on RD-spaces. Acta Math. Sci. Ser. B Engl. Ed. 34 (2014), no. 6, 1677--1694.




\bibitem{Ro} H. L. Royden, Real analysis. Third edition. Macmillan Publishing Company, New York, 1988.

\bibitem{YYZ}  D. Yang, D. Yang and Y. Zhou, Localized Morrey-Campanato spaces on metric measure spaces and applications to Schr\"odinger operators, Nagoya Math. J. 198 (2010), 77-119.


\bibitem{YZ} D. Yang and Y. Zhou, Localized Hardy spaces $H^1$ related to admissible functions on RD-spaces and applications to Schr\"odinger operators. Trans. Amer. Math. Soc. 363 (2011), no. 3, 1197-1239.



\end{thebibliography}
\end{document}